\theoremstyle{plain}
\newtheorem{theorem}{Theorem}[section]
\newtheorem{lemma}{Lemma}[section]
\newtheorem{problem}{Problem}[section]
\theoremstyle{definition}
\newcommand{\comment}[1]{}
\DeclareMathOperator{\Frac}{\ensuremath{Frac}}
\begin{document}
\title[The noncommutative Noether's problem]{The noncommutative Noether's problem is almost equivalent to the classical Noether's problem}
\author{Akaki Tikaradze}
\email{ Akaki.Tikaradze@utoledo.edu}
\address{University of Toledo, Department of Mathematics \& Statistics, 
Toledo, OH 43606, USA}
\begin{abstract}

Motivated by the classical Noether's problem, J. Alev  and F. Dumas proposed the following question, commonly referred to as the noncommutative
Noether's problem: Let a finite group $G$ act linearly on $\mathbb{C}^n$
 inducing the action on $\Frac(A_n(\mathbb{C}))$-the skew field of fractions of the $n$-th Weyl algebra $A_n(\mathbb{C}),$ is $\Frac(A_n(\mathbb{C}))^G$ isomorphic to $\Frac(A_n(\mathbb{C}))?$ In this note we show that if $\Frac(A_n(\mathbb{C})^{G}\cong \Frac(A_n(\mathbb{C}))$ then for any algebraically closed field $\bf{k}$ of large enough characterisitic, field $\bold{k}(x_1,\cdots, x_n)^G$ is stably rational. This result allows us to produce counterexamples to
 the noncommutative Noether's problem based on well-known counterexamples to the Noether's problem for algebraically closed fields.

\end{abstract}

\maketitle

Let $F$ be a field. Let a finite group $G$ act on variables $x_1,\cdots, x_n$ by permutations. Then the classical Noether's problem
asks whether the fixed field $F(x_1,\cdots, x_n)^G$ is a purely transcendental extension of $F,$ or
equivalently if $$F(x_1,\cdots, x_n)^G\cong F(x_1,\cdots, x_n).$$
 The first instances when
the Noether's problem (over $\mathbb{Q}$) has a negative answer was demonstrated by R. Swan \cite{Sw}. Subsequently Saltman constructed
counterexamples over algebraically closed fields \cite{S}. More specifically, Saltman constructed infinite family of $p$-groups $G$, such that for any algebraically closed field
of characteristic not dividing $|G|$, the fixed field $F(x(g), g\in G)^G$ is not stably rational: no purely transcendental
extension of  $F(x(g), g\in G)^G$ is purely transcendental over $F.$ In fact to the best of our knowledge all known counterexamples to Noether's problem
have the property that the fixed field is not stably rational.

   Motivated by the Noether's problem, Alev and Dumas \cite{AD} considered its noncommutative analogue, which is now commonly referred to
   as the noncommutative Noether's problem.  Throughout given a noetherian domain $A$, by $\Frac(A)$ we denote its skew field of fractions.

 \begin{problem}[Noncommutative Noether's problem]
 Let $G\subset GL_n(\mathbb{C})$ be a finite subgroup, then $G$ acts on the $n$-th Weyl algebra $A_n(\mathbb{C})$, hence acting
 on its skew field of fractions $\Frac(A_n(\mathbb{C}))$. Is $\Frac(A_n(\mathbb{C}))^G$ isomorphic to $\Frac(A_n(\mathbb{C}))$?
 \end{problem}

There is a clear analogy between the noncommutative Noetherian problem and the Gelfand-Kirillov conjecture about fraction fields
of enveloping algebras.
Noncommutative Noether's problem has been of interest to ring theorists for some time now. It was shown in \cite{EFOS} that
the answer is positive for complex reflection groups. More generally, it was proved by Futorny and Schwarz in \cite{FS} that if the classical Noether's problem has 
a positive answer for $G$, then so does the noncommutative one.

In this note we show that counterexamples to the Noether's problem by Saltman also give counterexamples to the noncommutative
one. 
\begin{theorem}\label{main}
Let $G\subset GL_n(\mathbb{Z})$ be a finite group such that $\Frac(A_n(\mathbb{C}))^G\cong \Frac(A_n(\mathbb{C}))$.
Then for all sufficiently large primes $p$ field $\overline{\mathbb{F}_p}(x_1,\cdots, x_n)^G$ is stably rational.
In particular, if $G$ is any of the groups constructed by Saltman \cite{S}, then viewing 
$A_n(\mathbb{C})$ as the ring of differential operators
on $\mathbb{C}[x(g), g\in G]$ with the usual action of $G$, we have that $\Frac(A_n(\mathbb{C}))^G\ncong \Frac(A_n(\mathbb{C})).$
\end{theorem}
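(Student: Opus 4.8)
The plan is to reduce the hypothesized isomorphism modulo a large prime $p$ and then read off the conclusion from the centers of the resulting characteristic-$p$ division rings, exploiting that modulo $p$ the Weyl algebra is Azumaya over a polynomial subring.

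First I would spread out the isomorphism. Since $G$ is finite, $A_n(\mathbb{C})^G$ is a finitely generated $\mathbb{C}$-algebra --- for instance because the order filtration on $A_n(\mathbb{C})$ is $G$-stable (as $G$ preserves $\operatorname{span}(\partial_1,\dots,\partial_n)$) and $\operatorname{gr}(A_n(\mathbb{C})^G)=(\operatorname{gr}A_n(\mathbb{C}))^G$ is finitely generated by Noether's theorem --- say on invariants $a_1,\dots,a_m$. An isomorphism $\phi\colon\Frac(A_n(\mathbb{C}))\to\Frac(A_n(\mathbb{C}))^G=\Frac(A_n(\mathbb{C})^G)$ is encoded by finitely much data: the images $\phi(x_i),\phi(\partial_i)$, written as fractions of polynomials in the $a_j$; the images $\phi^{-1}(a_j)$, written as fractions of polynomials in the $x_i,\partial_i$; and the finitely many identities expressing that $\phi$ and $\phi^{-1}$ are mutually inverse algebra homomorphisms. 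After clearing denominators this all takes place over a finitely generated $\mathbb{Z}$-subalgebra $R\subset\mathbb{C}$ with $|G|\in R^{\times}$, and after inverting finitely many further elements of $R$ the relevant denominators become units, so that $\phi$ and $\phi^{-1}$ induce mutually inverse isomorphisms between a localization of $A_n(R)$ and a localization of $A_n(R)^G$. Since $\operatorname{Spec} R\to\operatorname{Spec}\mathbb{Z}$ is dominant, for all but finitely many primes $p$ there is a maximal ideal of $R$ over $(p)$ with finite residue field; reducing modulo such an ideal and passing to $\overline{\mathbb{F}_p}$ --- invariants commute with these flat base changes because $|G|$ is invertible, and the classical ring of quotients of a prime Noetherian ring is unchanged under further Ore localization --- I obtain an isomorphism of $\overline{\mathbb{F}_p}$-algebras
\[
\Frac\!\big(A_n(\overline{\mathbb{F}_p})\big)\ \cong\ \Frac\!\big(A_n(\overline{\mathbb{F}_p})\big)^{G}
\]
for all sufficiently large $p$.

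Next I would compute centers in characteristic $p$. For large $p$, $A_n(\overline{\mathbb{F}_p})$ is Azumaya of rank $p^{2n}$ over its center $Z=\overline{\mathbb{F}_p}[x_1^{p},\dots,x_n^{p},\partial_1^{p},\dots,\partial_n^{p}]$, so $\Frac(A_n(\overline{\mathbb{F}_p}))$ is a division algebra whose center is $\Frac(Z)=\overline{\mathbb{F}_p}(x_1^{p},\dots,x_n^{p},\partial_1^{p},\dots,\partial_n^{p})$, which is purely transcendental over $\overline{\mathbb{F}_p}$ of degree $2n$. On the other hand $A_n(\overline{\mathbb{F}_p})^{G}$ is module-finite over $Z^{G}$, and, $G$ acting faithfully on $\Frac(Z)$ for large $p$ by Minkowski's theorem, Galois descent along the $G$-Galois extension $\Frac(Z)/\Frac(Z)^{G}$ shows that $\Frac(A_n(\overline{\mathbb{F}_p}))^{G}=A_n(\overline{\mathbb{F}_p})^{G}\otimes_{Z^{G}}\Frac(Z^{G})$ is a central simple algebra over $\Frac(Z)^{G}=\overline{\mathbb{F}_p}(x_1^{p},\dots,x_n^{p},\partial_1^{p},\dots,\partial_n^{p})^{G}$, which is thus its center. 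Comparing centers on the two sides of the isomorphism above, the field $\overline{\mathbb{F}_p}(x_1^{p},\dots,x_n^{p},\partial_1^{p},\dots,\partial_n^{p})^{G}$ must be purely transcendental over $\overline{\mathbb{F}_p}$.

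Finally I would untwist the Frobenius and apply the no-name lemma. Because the matrices of $G$ have entries in the prime field and $\overline{\mathbb{F}_p}$ is perfect, the $p$-th power map is a $G$-equivariant field isomorphism from $\overline{\mathbb{F}_p}(x_1,\dots,x_n,y_1,\dots,y_n)$ --- where $G$ acts by the standard representation on the $x_i$ and the contragredient one on the $y_i$ --- onto $\Frac(Z)$, carrying the $G$-stable subfield $\overline{\mathbb{F}_p}(x_1,\dots,x_n)$ onto $\overline{\mathbb{F}_p}(x_1^{p},\dots,x_n^{p})$; hence $\overline{\mathbb{F}_p}(x_1,\dots,x_n,y_1,\dots,y_n)^{G}$ is purely transcendental over $\overline{\mathbb{F}_p}$. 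Viewing $\bigoplus_i\overline{\mathbb{F}_p}(x_1,\dots,x_n)\,y_i$ as a semilinear $G$-module over the $G$-field $\overline{\mathbb{F}_p}(x_1,\dots,x_n)$ (on which $G$ acts faithfully), Speiser's lemma provides a $G$-invariant basis $t_1,\dots,t_n$, so that $\overline{\mathbb{F}_p}(x_1,\dots,x_n,y_1,\dots,y_n)^{G}=\overline{\mathbb{F}_p}(x_1,\dots,x_n)^{G}(t_1,\dots,t_n)$. Combining the last two statements, $\overline{\mathbb{F}_p}(x_1,\dots,x_n)^{G}(t_1,\dots,t_n)$ is rational over $\overline{\mathbb{F}_p}$, i.e.\ $\overline{\mathbb{F}_p}(x_1,\dots,x_n)^{G}$ is stably rational; this proves the first assertion, and the second follows because Saltman's groups, realized inside $GL_{|G|}(\mathbb{Z})$ via the regular permutation representation, do not have $\overline{\mathbb{F}_p}(x(g):g\in G)^{G}$ stably rational when $p\nmid|G|$. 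I expect the main obstacle to be the spreading-out step: since $\Frac(-)$ is not compatible with base change, one must carefully identify the finite set of denominators that have to remain invertible after reduction mod $p$ and verify that the reduced maps are still mutually inverse isomorphisms of the appropriate localizations (so that they extend to the rings of fractions in characteristic $p$); the characteristic-$p$ center computation via Galois descent for Azumaya algebras is a secondary technical point.
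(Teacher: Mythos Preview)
Your proposal is correct and follows the same global strategy as the paper: spread the isomorphism out to characteristic $p$ (the paper's Lemma~\ref{reduction}, modeled on Premet) and then compare centers. The one substantive difference lies in how you show that $Z\bigl(\Frac(A_n(F))^G\bigr)$ is purely transcendental over $F(x_1^p,\dots,x_n^p)^G$. The paper (Lemma~\ref{trivial}) passes to the free locus $h^{\mathrm{reg}}$, identifies $\Frac(A_n(F))^G$ with $\Frac\bigl(D(h^{\mathrm{reg}}/G)\bigr)$, and then invokes the description of $Z(D(X))$ as the Frobenius twist of $\mathcal{O}(T^*X)$ from \cite{BMR}; the center is thus the function field of the cotangent bundle of $h^{\mathrm{reg}}/G$, hence purely transcendental over $F(x_i^p)^G$. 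You instead compute the center as $\Frac(Z)^G$ via Galois descent for the Azumaya algebra $A_n(F)$, and then apply Speiser's lemma (the no-name lemma) to show $F(x_i,y_j)^G$ is purely transcendental over $F(x_i)^G$. These are two packagings of the same underlying principle---a vector bundle is generically trivial---and each buys something: the paper's geometric route avoids the explicit descent and Hilbert~90 arguments but imports the crystalline center description, while your route stays elementary on the commutative side at the cost of checking that $G$ acts faithfully on $\Frac(Z)$ and that descent applies. Your caution about the spreading-out step is well placed; the paper handles it exactly as you outline, clearing denominators and encoding the isomorphism by finitely many polynomial identities.
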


Our result and its proof is motivated by Premet's work on the Gelfand-Kirillov conjecture for semi-simple Lie algebras \cite{P}, in which
he settled the conjecture negatively for simple Lie algebras of types  $B_n, n\geq 3, D_n, n\geq 4, E_6, E_7, E_8, F_4.$

Recall that if $char(F)=p,$ then the center of the $n$-th Weyl algebra 
$$A_n(F)=F\langle x_1,\cdots, x_n, y_1,\cdots, y_n\rangle$$
which is given with the usual generators and the usual relations $$[y_i, x_j]=\delta_{ij},\quad  [y_i, y_j]=0=[x_i, x_j]$$
 equals to $$Z(A_n(F))=F[x_1^p,\cdots, x_n^p, y_1^p,\cdots, y_n^p].$$
More generally, given a smooth affine variety $X$ over an algebraically closed field $\bf{k}$
of characteristic $p,$ we recall
 the following well-known description of the center of the ring of crystalline differential operators
$D(X)$(no divided powers). Given a  vector field $\xi\in Der_{\bold{k}}(\mathcal{O}(X)),$ denote by 
$\xi^{[p]}\in Der_{\bold{k}}(\mathcal{O}(X))$ its $p$-th power. Then $Z(D(X))$ is generated over $\mathcal{O}(X)^p$
by $$\xi^p-\xi, \xi\in Der_{\bold{k}}(\mathcal{O}(X)),$$ which yields a canonical isomorphism between $Z(D(X))$ and
the Frobenius twist of the ring of functions on the cotangent bundle of $X,$ see \cite{BMR}.  In particular, it follows that
$\Frac(Z(D(X)))$ is a purely transcendental extension of $\bold{k}(X)^p.$

The next result allows the passage from characteristic 0 to characteristic $p$ setting, it is very similar to [\cite{P}, Theorem 2.1].

\begin{lemma}\label{reduction}

Let $S\subset\mathbb{C}$ be a finitely generated ring, let $A$ be an $S$-algebra, such that it can be equipped
with an ascending filtration concentrated in non negative degrees, so that $gr(A)$ is a finitely generated commutative domain over $S.$ 
Assume that $gr(A)\otimes\mathbb{C}$ is a domain. Let $\Frac(A\otimes\mathbb{C})\cong \Frac(A_n(\mathbb{C}))$ for some $n.$
Then for all $p\gg 0$, there exists a base change $S\to\bf{k}$ to an algebraically closed field of characteristic $p$ so that 
$\Frac(A_{\bf{k}})\cong\Frac(A_n(\bf{k})).$

\end{lemma}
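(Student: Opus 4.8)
The plan is the familiar ``the isomorphism is already defined over a finitely generated ring, so specialize it'' argument, carried out much as in [\cite{P}, Theorem~2.1]; the two points to watch are that the reduction of $A$ stays a domain and that no denominator occurring in the formulas for $\phi\colon\Frac(A\otimes\mathbb{C})\xrightarrow{\sim}\Frac(A_n(\mathbb{C}))$ or $\phi^{-1}$ degenerates to zero. I would first make some harmless reductions on $S$. Since $\Gr(A)$ is a finitely generated commutative $S$-algebra, $A$ is finitely generated over $S$, and in fact finitely \emph{presented} (a filtered algebra whose associated graded is a finitely presented commutative algebra is finitely presented: lift a finite presentation of $\Gr(A)$ and adjoin the commutator relations among the chosen generators). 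By generic flatness, after inverting one element of $S$ I may assume $\Gr(A)$, hence $A$ (each of whose filtration pieces is then a finitely generated flat, so locally free, $S$-module), is $S$-flat; in particular $\Gr(A)$ becomes a domain. As $\Gr(A)\otimes_S\mathbb{C}$ is a domain and $\overline{\Frac(S)}\subseteq\mathbb{C}$, the generic fibre of $\spec\Gr(A)\to\spec S$ is geometrically integral, so after inverting one more element I may assume every geometric fibre is integral, of the constant dimension $2n$ (the value by flatness together with the hypothesis over $\mathbb{C}$, where $\Gr(A\otimes\mathbb{C})$ has dimension equal to the Gelfand--Kirillov dimension of $A_n(\mathbb{C})$, namely $2n$, the equality of this with the Gelfand--Kirillov dimension of $A\otimes\mathbb{C}$ holding because the two share a skew field of fractions). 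Then for \emph{any} base change $S\to\mathbf{k}$ to an algebraically closed field, $\Gr(A)\otimes_S\mathbf{k}$ is a commutative domain of dimension $2n$, so $A\otimes_S\mathbf{k}$ is a noetherian domain of Gelfand--Kirillov dimension $2n$ (it is filtered with noetherian commutative associated graded, hence Ore); the same holds for $A_n(\mathbf{k})$, and both therefore possess skew fields of fractions.

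Next I would spread out $\phi$. Fixing generators $a_1,\dots,a_m$ of $A$ over $S$, write $\phi(a_i)=P_iQ_i^{-1}$, $\phi^{-1}(x_j)=R_jT_j^{-1}$, $\phi^{-1}(y_j)=U_jV_j^{-1}$ in the respective skew fields. Using the finite presentation of $A$ and clearing denominators inside the Ore domains $A\otimes\mathbb{C}$ and $A_n(\mathbb{C})$, the assertions that $\phi$ and $\phi^{-1}$ are mutually inverse algebra homomorphisms unwind into finitely many polynomial identities involving only finitely many elements of $\mathbb{C}$. Adjoining to $S$ those elements together with a nonzero coefficient of each denominator that appears --- including the auxiliary denominators forced by the Ore conditions during the clearing --- and inverting them, I obtain a finitely generated subring $R\subseteq\mathbb{C}$, still satisfying the conclusions of the previous paragraph, over which there are algebra homomorphisms $\phi_R\colon A_R\to\Frac(A_n(R))$ and $\psi_R\colon A_n(R)\to\Frac(A_R)$ (with $A_R=A\otimes_SR$) extending to mutually inverse isomorphisms of skew fields --- each injective because it is the restriction of the corresponding map over $\mathbb{C}$.

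Finally I would reduce modulo $p$. For $p\gg0$ the morphism $\spec R\to\spec\mathbb{Z}$ hits $(p)$, and a dimension count on the fibre $\spec(R\otimes\mathbb{F}_p)$ shows that for $p$ large this fibre meets the dense open locus of $\spec R$ over which $\Gr(A)$ has geometrically integral fibres and all the designated coefficients are units; picking a closed point $\mathfrak{m}$ there, set $\mathbf{k}=\overline{R/\mathfrak{m}}$, an algebraically closed field of characteristic $p$. Base changing the polynomial identities along $A_R\to A\otimes_S\mathbf{k}$ and $A_n(R)\to A_n(\mathbf{k})$, and using that the designated denominators reduce to nonzero elements, I get algebra homomorphisms $\phi_{\mathbf{k}}\colon A\otimes_S\mathbf{k}\to\Frac(A_n(\mathbf{k}))$ and $\psi_{\mathbf{k}}\colon A_n(\mathbf{k})\to\Frac(A\otimes_S\mathbf{k})$ that are mutually inverse on generators. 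Now $\psi_{\mathbf{k}}$ must be injective: its image is a finitely generated noetherian domain whose skew field of fractions is $\Frac(A\otimes_S\mathbf{k})$, hence of Gelfand--Kirillov dimension $2n$ (GK-dimension is a birational invariant of finitely generated noetherian domains, via Zhang's lower transcendence degree), whereas a proper homomorphic image of the noetherian domain $A_n(\mathbf{k})$ has strictly smaller GK-dimension. Hence $\psi_{\mathbf{k}}$ extends to the skew field of fractions; composing with $\phi_{\mathbf{k}}$ shows $\phi_{\mathbf{k}}$ is injective too, and the two extensions are mutually inverse isomorphisms $\Frac(A\otimes_S\mathbf{k})\cong\Frac(A_n(\mathbf{k}))$.

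The step that will demand the most care is the bookkeeping underlying the last two paragraphs: one must track \emph{every} denominator arising while verifying that $\phi$ and $\phi^{-1}$ are mutually inverse ring maps --- not only the visible $Q_i,T_j,V_j$ but also the auxiliary ones produced by the Ore conditions --- and arrange that each of them, as well as $\Gr(A)$, reduces well at the chosen $\mathfrak{m}$. This is precisely why the conclusion asks only for the existence of a suitable $\mathbf{k}$ for each large $p$, rather than a statement valid for every reduction.
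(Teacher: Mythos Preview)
Your argument is correct and follows the same ``spread out the isomorphism over a finitely generated ring, then specialize'' strategy that the paper uses, which in turn is borrowed from Premet~[\cite{P}, Theorem~2.1]. The only substantive difference is presentational: the paper encodes the isomorphism by choosing Weyl generators $u_i^{-1}u'_i,\,v_i^{-1}v'_i$ inside $\Frac(A\otimes\mathbb{C})$ together with the fact that they generate, packages both the commutation relations and the generation statement as finitely many noncommutative polynomial identities $\phi_j(y_1,\dots,y_l)=0$, and then asserts without further comment that for $p\gg0$ one can reduce so that $\Gr(A)\otimes_S\mathbf{k}$ stays a domain and the identities persist; you instead track $\phi$ and $\phi^{-1}$ separately on generators, and you make explicit two points the paper leaves implicit --- the constructibility/openness argument guaranteeing geometrically integral fibres of $\Gr(A)$, and the injectivity of the reduced map $A_n(\mathbf{k})\to\Frac(A_{\mathbf{k}})$ via a GK-dimension comparison. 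Both routes arrive at the same place by the same mechanism; your write-up simply supplies the details that the paper's ``it easily follows'' suppresses.
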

\begin{proof}
The proof closely follows that of [\cite{P}, Theorem 2.1], so some details are omitted.
Let $u_i, u'_i, v_i, v'_i\in A, 1\leq i\leq n$ be so that $u_i^{-1}u'_i, v_i^{-1}v'_i, 1\leq i\leq n$
generate $\Frac(A)$ and  $$[u_i^{-1}u'_i, v_i^{-1}v'_i]=\delta_{ij}.$$
After getting rid of all denominators, we may construct nonzero elements $y_1,\cdots, y_l\in A$ and noncommutative polynomials
$\phi_j,1\leq j\leq k$ such that
the above statements are equivalent to $$\phi_j(y_1,\cdots, y_l)=0, 1\leq j\leq k.$$
Now it easily follows that for any large enough prime $p$ there exists a base change $S\to \bf{k}$ to an algebraically closed field
of characteristic $p,$ such that $gr(A)\otimes_S\bf{k}$ and hence $A_{\bf{k}}$ is a domain and images of $u_i^{-1}u'_i, v_i^{-1}v'_i, 1\leq i\leq n$
satisfy relations of the Weyl algebra and generate $\Frac(A_{\bf{k}}).$
\end{proof}

We also need the following result.

\begin{lemma}\label{trivial}

Let $G\subset GL_n(F)$ be a finite group, $F$ algebraically closed field of characteristic $p.$
Then $Z(\Frac(A_n(F))^G)=Z(\Frac(A_n(F)))^G$ is a purely transcendental extension on $F(x_1^p,\cdots, x_n^p)^{G}.$

\end{lemma}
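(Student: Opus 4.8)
The plan is to deduce both assertions from Galois descent along the extension $K/K^{G}$, where $K:=Z(\Frac(A_n(F)))$. Write $D:=\Frac(A_n(F))$, a division ring by Ore localization. Since $A_n(F)$ is a finite module over $Z(A_n(F))=F[x_1^{p},\dots,x_n^{p},y_1^{p},\dots,y_n^{p}]$ and is Azumaya over it (cf. \cite{BMR}), $D$ is a central division algebra over $K=\Frac(Z(A_n(F)))=F(x_1^{p},\dots,x_n^{p},y_1^{p},\dots,y_n^{p})$, and $G$ acts on it by $F$-algebra automorphisms preserving $Z(A_n(F))$ (with the $y_i$ transforming by the contragredient of the given representation). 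The crucial preliminary point — the one I expect to carry all the weight — is that $G$ acts \emph{faithfully} on $K$: if $g\cdot x_i=\sum_j a_{ij}x_j$ then $g\cdot x_i^{p}=\sum_j a_{ij}^{p}x_j^{p}$, and likewise for the $y_i^{p}$, so since $F$ is perfect (the $p$-th power map is injective) the element $g$ fixes $K$ pointwise only if it fixes every $x_i$ and $y_i$, i.e. $g=1$. Hence $K/K^{G}$ is a finite Galois extension with group $G$, and the $G$-action on $D$ is $K$-semilinear over it. Without this faithfulness, descent is simply unavailable (for inner actions, for instance, $G$ fixes the centre pointwise while $D^{G}$ is strictly smaller), so checking it carefully is the main obstacle; the rest is formal.

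For the equality $Z(D^{G})=Z(D)^{G}$, I would apply Galois descent to $D$ viewed as a $K$-algebra with semilinear $G$-action: the canonical map $D^{G}\otimes_{K^{G}}K\to D$ is then an isomorphism of $K$-algebras. Now $D^{G}$ is again a division ring, since inverses of invariant elements are invariant; it is therefore simple, with centre $E:=Z(D^{G})\supseteq K^{G}$, hence central simple over $E$. For the commutative $E$-algebra $B:=E\otimes_{K^{G}}K$ one has $Z(D^{G}\otimes_{E}B)=B$, so $K=Z(D)\cong Z(D^{G}\otimes_{K^{G}}K)=Z(D^{G}\otimes_{E}B)=E\otimes_{K^{G}}K$ as $K$-algebras; comparing dimensions over $K$ forces $[E:K^{G}]=1$, that is $Z(D^{G})=K^{G}=Z(D)^{G}$. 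One should take care that the descent isomorphism is one of algebras compatible with the $K$-structures, so that this dimension count is legitimate.

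For the pure transcendence, set $L:=F(x_1^{p},\dots,x_n^{p})$, on which $G$ acts faithfully by the same $p$-th root argument, so $L/L^{G}$ is Galois with group $G$ and $L^{G}=F(x_1^{p},\dots,x_n^{p})^{G}$. Let $W:=\bigoplus_{i=1}^{n}F\,y_i^{p}$, a $G$-stable $n$-dimensional $F$-subspace of $Z(A_n(F))$; then $Z(A_n(F))=F[x_1^{p},\dots,x_n^{p}]\otimes_{F}\Sym_{F}(W)$, whence $K=\Frac\bigl(\Sym_{L}(\widetilde W)\bigr)$ with $\widetilde W:=L\otimes_{F}W$, an $n$-dimensional $L$-vector space carrying the diagonal ($L$-semilinear) $G$-action. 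Using that $\Frac(R)^{G}=\Frac(R^{G})$ for any commutative domain $R$ with a finite group action, I get $K^{G}=\Frac\bigl(\Sym_{L}(\widetilde W)^{G}\bigr)$; and Galois descent gives $\widetilde W\cong L\otimes_{L^{G}}\widetilde W^{G}$ with $\dim_{L^{G}}\widetilde W^{G}=n$, so $\Sym_{L}(\widetilde W)^{G}=\Sym_{L^{G}}(\widetilde W^{G})$ is a polynomial ring over $L^{G}$ in $n$ variables. Therefore $K^{G}$ is purely transcendental of transcendence degree $n$ over $F(x_1^{p},\dots,x_n^{p})^{G}$, as claimed. In short, once the faithfulness of the $G$-action on the centre is established, both statements are immediate consequences of Galois descent and the standard computation of the centre of a central simple algebra after a commutative base change.
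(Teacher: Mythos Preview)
Your proof is correct and takes a genuinely different route from the paper's. For the equality $Z(D^{G})=K^{G}$, the paper simply notes that the $G$-action on $\Frac(A_n(F))$ is outer and invokes a standard fixed-ring result from \cite{M}, whereas you reprove this fact from scratch via Galois descent for the semilinear $G$-action on the central simple $K$-algebra $D$ together with the computation of the centre after base change; your argument is self-contained but longer. For the pure transcendence, the paper argues geometrically: restricting to the free locus $h^{reg}\subset F^{n}$ one has $\Frac(A_n(F))^{G}=\Frac\bigl(D(h^{reg}/G)\bigr)$, and the identification of $Z(D(X))$ with the Frobenius twist of $\mathcal{O}(T^{*}X)$ from \cite{BMR} then exhibits $Z(\Frac(A_n(F))^{G})$ as the function field of a vector bundle over the Frobenius twist of $h^{reg}/G$, hence as a purely transcendental extension of $F(x_1^{p},\dots,x_n^{p})^{G}$. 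You instead apply Galois descent directly to the $L$-semilinear $G$-module $\widetilde W=L\otimes_F\bigoplus_i F\,y_i^{p}$, obtaining an $L^{G}$-form $\widetilde W^{G}$ of dimension $n$ and hence an explicit polynomial presentation $K^{G}=\Frac\bigl(\Sym_{L^{G}}(\widetilde W^{G})\bigr)$. The paper's method is more conceptual and generalises verbatim to $D(X)$ for any smooth $X$ with a free $G$-action; your method is more elementary, never invokes the \cite{BMR} description of the centre, and yields an explicit transcendence basis.
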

\begin{proof}
Since $G$ acts on $\Frac(A_n(F))$ by outer automorphisms, it follows from standard results about fixed rings (see for example [\cite{M}, Corollary 6.17])
 that
$$Z(\Frac(A_n(F))^G)=(Z(Frac(A_n(F)))^G.$$ 
Let $h^{reg}\subset F^n$ denote the open locus of points with trivial stabilizer in $G.$
Thus, $\Frac(A_n(F))=Frac(D(h^{reg}))$, so $\Frac(A_n(F))^G=\Frac(D(h^{reg})^G).$ Also, since $G$ acts freely on $h^{reg}$, 
we have $D(h^{reg})^G=D(h^{reg}/G).$
Hence, $$Z(\Frac(A_n(F))^G)=\Frac(Z(D(h^{reg}/G))$$ is a purely transcendental extension of
$$F(h^{reg}/G)^p=F(x_1^p,\cdots, x_n^p)^{G}.$$
\end{proof}

\begin{proof}[Proof of Theorem \ref{main}]

Assume that  $\Frac(A_n(\mathbb{C}))^G\cong \Frac(A_n(\mathbb{C})).$
It follows from Lemma \ref{reduction} that for all $p\gg 0$ and algebraically closed field $F$ of characteristic $p$, we have 
$$Z(\Frac(A_n(F))^G)\cong Z(\Frac(A_n(F)))=F(x_i^p, y_i^p, 1\leq i\leq n).$$ But by Lemma \ref{trivial}  $Z(\Frac(A_n(F)^G)$
is a purely transcendental extension of $F(x_i^p, 1\leq i\leq n)^G.$
Hence we conclude that $F(x_i, 1\leq i\leq n)^G$ is stably rational.
\end{proof}

\end{document}